\RequirePackage{fix-cm}

\documentclass[smallextended]{svjour3}
\smartqed  

\usepackage[utf8]{inputenc}
\usepackage{enumerate}
\usepackage{graphicx}
\usepackage{epstopdf}
\usepackage{amsfonts}
\usepackage{amssymb}
\usepackage{amsmath}
\usepackage{wasysym}
\usepackage{hyperref}
\usepackage{mathtools}
\usepackage{bibentry}
\usepackage{comment}
\usepackage{parskip}

\def\dist{\textrm{dist}}

\begin{document}
  
\title{A relaxed version of \v{S}olt\'{e}s's problem and cactus graphs}

\author{Jan Bok \and Nikola Jedli\v{c}kov\'{a} \and Jana Maxov\'{a}}

\institute{Jan Bok \at
Computer Science Institute, Faculty of Mathematics and Physics, Charles University,
Malostransk\'{e} n\'{a}m. 25, 118~00~~Praha~1, Prague, Czech Republic. \\
ORCID: 0000-0002-7973-1361\\
\email{bok@iuuk.mff.cuni.cz}\\
This author is the corresponding author.
\and
Nikola Jedli\v{c}kov\'{a} \at
Department of Applied Mathematics, Faculty of Mathematics and Physics, Charles University,
Malostransk\'{e} n\'{a}m. 25, 118~00~~Praha~1, Prague, Czech Republic. \\
ORCID: 0000-0001-9518-6386\\
\email{jedlickova@kam.mff.cuni.cz}
\and
Jana Maxov\'{a} \at
Department of Mathematics, Faculty of Chemical Engineering, University of Chemistry and Technology,
Technick\'{a} 5, 166~28~Praha~6, Prague, Czech Republic. \\
\email{maxovaj@vscht.cz}
}

\date{Received: date / Accepted: date}

\maketitle

\begin{abstract}
The \emph{Wiener index} is one of the most widely studied parameters in chemical graph
theory. It is defined as the sum of the lengths of the shortest paths between
all unordered pairs of vertices in a given graph. In 1991, Šoltés posed the
following problem regarding the Wiener index: Find all graphs such that its Wiener
index is preserved upon removal of any vertex. The problem is far from being
solved and to this day, only one graph with such property
is known: the cycle graph on 11 vertices.

In this paper, we solve a relaxed version of the problem, proposed by Knor et
al.\ in 2018. For a given $k$, the problem is to find (infinitely
many) graphs having exactly $k$ vertices such that the Wiener index remains the same 
after removing any of them. We call these vertices
\emph{good} vertices and we show that there are infinitely many cactus graphs
with exactly $k$ cycles of length at least 7 that contain exactly $2k$ good
vertices and infinitely many cactus graphs with exactly $k$ cycles of length
$c \in \{5,6\}$ that contain exactly $k$ good vertices. On the other hand, we
prove that $G$ has no good vertex if the length of the longest cycle in $G$ is
at most $4$.

\keywords{Wiener index \and Topological indices \and Chemical graph theory \and Cactus graphs}
\subclass{05D99, 94C15}
\end{abstract}

\section*{Note on the conference proceedings version of this paper}

An extended abstract of this paper has appeared as \emph{On relaxed
\v{S}olt\'{e}s's problem} in the proceedings of EUROCOMB 2019 conference. The
proceedings were published in Acta Mathematica Universitatis Comenianae.
Compared to the proceedings, this paper has full proofs and it is
self-contained. Furthermore, the experimental results were added.

\section{Introduction}
\label{s:uvod}

The \emph{Wiener index} (also \emph{Wiener number}) is a topological index of
a connected graph, defined as the sum of the lengths of the shortest paths
between all unordered pairs of vertices in the graph. In other words, for a
connected graph $G=(V,E)$, the Wiener index $W(G)$ is defined as $$W(G) :=
\sum_{\{u,v\} \subseteq V}\mathrm{dist}_G(u,v).$$ The index was originally
introduced in 1947 by Wiener~\cite{wiener}  for the purpose of determining the
approximation formula of the boiling point of paraffin. Since then, Wiener
index has become one of the most frequently used topological indices in
chemistry, since molecules are usually modelled by undirected graphs. The
definition of Wiener index in terms of distances between vertices of a graph
was first given by Hosoya~\cite{hosoya}. The index was subsequently
extensively studied by many mathematicians. Apart from pure mathematics, many
applications of Wiener index were found in chemistry, cryptography, theory of
communication, topological networks and others. The quantity is used in
sociometry and the theory of social networks, since it provides a robust
measure of network topology~\cite{estrada}. We refer the interested reader to
the numerous surveys of both applications and theoretical results regarding
the Wiener index, e.g.\
\cite{bonchev2002wiener,dobrynin2001wiener,knor2014wiener,knor2016,Xu2014461}.
We would like to point out that there is also a rich literature about Wiener
index of unicyclic graphs, e.g.\
\cite{dong2012maximum,hou2012maximum,liu2011wiener}. Also, Section 4 of the
aforementioned paper of Furtula et al.\ \cite{Xu2014461} is completely devoted
to unicyclic graphs.

An interesting question regarding the Wiener index is to study how small changes in
a graph affect its Wiener index. On the one hand, it is clear that with edge
removal, the Wiener index increases. On the other hand, the effect of deleting
a vertex is not so clear. Šoltés studied such changes in graphs~\cite{soltes}
and he noticed that the equality $W(C_{11})=W(C_{11} - v) = W(P_{10})$ holds for
every vertex $v \in V(C_{11})$.

Connected graphs that satisfy the equality $W(G) = W(G-v)$ for all  $v \in V(G)$ are
called \emph{Šoltés's graphs}. Šoltés found just one such graph --- the cycle
on eleven vertices $C_{11}$. To this day, this is the only known graph and it
is not known if there exists any other. Thus the following problem, posed by
Šoltés~\cite{soltes} in 1991, is still open.

\begin{problem}[Šoltés's problem]
Find all graphs $G$ such that the equality $W(G) = W(G-v)$ holds for every $v \in V(G)$.
\end{problem}

We remark the existence of graphs satisfying inequality $W(G) \ge W(G-v)$ for
every vertex $v$ of $G$, e.g.\ complete graphs, and there are graphs satisfying inequality
$W(G) \le W(G-v)$ for every vertex $v$ of $G$, e.g. \ cycles up to 10 vertices.

From now on, we assume that all graphs are connected unless we say otherwise.
Otherwise, the problem would become trivial, since the Wiener index of disconnected graphs
is defined as infinity and thus every disconnected graph would be a Šoltés graph.

Knor, Majstorovi\'{c} and \v{S}krekovski~\cite{knor_druhy} defined and studied the following relaxed version of Šoltés's problem.

\begin{problem}~\cite{knor_druhy}
Find all graphs $G$ in which the equality $W(G) = W(G-v)$ holds for at least one vertex $v \in V(G)$.
\end{problem}

A vertex $v \in V(G)$ is called a \emph{good vertex} if $W(G) = W(G-v)$ holds.  In this
terminology, a graph is a Šoltés's graph if all its vertices are good. It was
shown in~\cite{knor2018} that there exist infinitely many unicyclic graphs
with at least one good vertex of degree 2. In~\cite{knor_druhy}, the same
authors found for a given $k \geq 3$ infinitely many graphs that have a good
vertex of degree $k$ and infinitely many graphs with a good vertex of degree
$n-2$ and $n-1$. Furthermore, they proved that dense graphs cannot be Šoltés's
graphs. They also posed the following problem in~\cite{knor_druhy}.

\begin{problem}
\label{p:k good}
For a given $k$, find infinitely many graphs $G$ for which the equality $$W(G) = W(G-v_1) = W(G-v_2)= \ldots = W(G-v_k)$$ holds for distinct vertices $v_1, \dots v_k \in V(G)$.
\end{problem}

In this paper, we solve this problem by finding such an infinite class of graphs
within the class of \emph{cacti}. We recall that \emph{cactus} is a graph
where every edge belongs to at most one cycle. Let us summarize our main
results.

\begin{itemize}
\item We found infinitely many cactus graphs with exactly $k$ cycles of length at least 7 that contain exactly $2k$ good vertices (Theorem \ref{t:exactly2k})
and infinitely many cactus graphs with exactly $k$ cycles of length $c \in \{5,6\}$ that contain exactly $k$ good vertices (Theorem \ref{t:exactlyk}).
\item We prove that $G$ has no good vertex if the length of the longest cycle in $G$ is at most $4$ (Theorem \ref{t:girth4}).
\end{itemize}

\section{Preliminaries}
\label{s:prelim}

All graphs in this paper are simple, undirected and connected. As our results refine and
extend those  of~\cite{knor2018} and~\cite{knor_druhy}, most of the time, we
follow the notation introduced there.

Let $G$ be a connected graph and let $v$ be a vertex in $V(G)$. By $d_G(v)$ we
denote the degree of $v$ in $G$. A \emph{pendant vertex} is a vertex of degree
one and a \emph{pendant edge} is the only edge incident to a pendant vertex.
Note that Wiener index can also be written as 
$$W(G) = \frac 12 \sum _{v \in V(G)} t_G(v),$$ where $t_G(v)$, the \emph{transmission} of $v$ in $G$, is the
sum of distances between $v$ and all the other vertices of $G$.

The complete graph $K_n$ has the smallest Wiener index among all graphs on $n$
vertices since the distance between any two distinct vertices is at least one
in any graph. It is well known that for any connected graph on $n$ vertices,
the maximum Wiener index is obtained for the path $P_n$. Thus, for every graph
$G$ on $n$ vertices, we have
$$\binom{n}{2} = W(K_n) \leq W(G) \leq W(P_n) = \binom{n+1}{3}.$$

It is easy to see that for the Wiener index of the cycle of length $n$, it holds that 
$$W(C_n)= \left\{ 
\begin{array}{ll}
\frac{n^3}8  & \text{if } n \text{ is even,}\\[1mm]
\frac{n^3-n}8  & \text{if } n \text{ is odd.} 
\end{array}
\right.
$$

The proof of the following proposition is also straightforward.

\begin{proposition} \label{prop:pendant}
Let $G$ be a connected graph. Take a new vertex $z$ and connect it by a pendant edge to a vertex $u \in V(G)$. Denote the resulting graph by $G^+$. Then,
$W(G^+) = W(G) + t_G(u) + |V(G)|$.
\end{proposition}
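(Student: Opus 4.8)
The plan is to split the sum defining $W(G^+)$ according to whether the new vertex $z$ participates in a pair. Writing $V = V(G)$, I would partition the unordered pairs of $V(G^+) = V \cup \{z\}$ into the pairs $\{a,b\}$ with $a,b \in V$ and the pairs $\{z,a\}$ with $a \in V$, so that
$$W(G^+) = \sum_{\{a,b\} \subseteq V} \dist_{G^+}(a,b) + \sum_{a \in V} \dist_{G^+}(z,a).$$
The two sums will account for the $W(G)$ term and the $t_G(u) + |V(G)|$ term respectively.

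First I would treat the first sum. Since $z$ has degree one in $G^+$, no shortest path between two vertices of $G$ can pass through $z$; hence $\dist_{G^+}(a,b) = \dist_G(a,b)$ for all $a,b \in V$, and the first sum equals $W(G)$ exactly.

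For the second sum, the key observation is again that $z$ is pendant and adjacent only to $u$, so every walk leaving $z$ must traverse the edge $zu$. This forces $\dist_{G^+}(z,a) = 1 + \dist_G(u,a)$ for every $a \in V$, the case $a = u$ being the trivial instance where both sides equal $1$. Summing over $a \in V$ then gives $|V| + \sum_{a \in V} \dist_G(u,a) = |V| + t_G(u)$, where I use that the $a = u$ term contributes $0$ to the transmission. Adding the two contributions yields $W(G^+) = W(G) + t_G(u) + |V(G)|$.

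I do not expect any substantial obstacle in this proof; it is a direct distance computation. The only point that genuinely needs justification is that attaching a pendant vertex leaves all distances within $G$ unchanged and that every path out of $z$ is forced through $u$, both of which follow immediately from $d_{G^+}(z) = 1$.
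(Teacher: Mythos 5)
Your proof is correct: the paper itself omits the argument, remarking only that it is ``straightforward,'' and your decomposition of the pairs of $V(G^+)$ into pairs within $V(G)$ and pairs containing $z$, together with the observations that the pendant vertex $z$ does not alter distances in $G$ and that $\dist_{G^+}(z,a)=1+\dist_G(u,a)$, is precisely the intended direct computation. Nothing is missing.
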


Recall that $v$ is a \emph{good} vertex in $G$ if $W(G) = W(G-v)$.
Let $v_1$ be a fixed vertex in $G$. For a vertex $x \in V(G)$, $x \neq v_1$ we denote (similarly as in~\cite{knor2018})
$$\delta _G (x) = t_G(x) - t_{G-v_1}(x) \quad \text{and} \quad \Delta(G) = W(G) - W(G-v_1).$$

Observe that $\Delta(G)=0$ means that $v_1$ is a good vertex in $G$ and
$\delta_G (x)$ gives us the contribution of the vertex $x$ to $\Delta(G)$.

\section{Infinite families}

First, we need to state a few simple lemmas. We will need them for the proof of Theorem~\ref{t:main}.

\begin{lemma} \label{l:pendant}
Let $G$ be a connected graph with a fixed vertex $v_1$. Take a new vertex $z$ and connect it by a pendant edge to a vertex $u \in V(G)$, $u \neq v_1$. Denote the resulting graph by $G^+$. Hence, $\delta_{G^+}(z) = \delta_{G^+}(u)+1 = \delta_G(u)+1$.
\end{lemma}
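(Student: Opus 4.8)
The plan is to track how the two transmissions $t(z)$ and $t(u)$ behave in $G^+$ and in $(G^+)-v_1$ separately, and then take differences. The crucial structural observation I would isolate first is that, since $z$ is a pendant vertex and $u\neq v_1$ (while also $z\neq v_1$, because $z$ is new whereas $v_1\in V(G)$), deleting $v_1$ from $G^+$ yields exactly the graph obtained from $G-v_1$ by attaching the same pendant $z$ at $u$. In other words, the operations ``attach pendant $z$ at $u$'' and ``delete $v_1$'' commute: $(G^+)-v_1=(G-v_1)^+$. This is what lets me reuse one and the same pendant formula in both ambient graphs.

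Next I would record the elementary distance facts. Because $z$ has degree one, no shortest path between two vertices of $G$ runs through $z$, so $\dist_{G^+}(x,y)=\dist_G(x,y)$ for all $x,y\in V(G)$; moreover every $x\in V(G)$ satisfies $\dist_{G^+}(z,x)=\dist_G(u,x)+1$, since such a path must leave $z$ through $u$. Summing over $x\in V(G)=V(G^+)\setminus\{z\}$ gives $t_{G^+}(z)=t_G(u)+|V(G)|$, while summing the distances out of $u$ and adding the single new term $\dist_{G^+}(u,z)=1$ gives $t_{G^+}(u)=t_G(u)+1$. These are the vertex-level analogues of the reasoning underlying Proposition~\ref{prop:pendant}.

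Then I would apply the very same two identities inside $(G^+)-v_1=(G-v_1)^+$, where $z$ is again a pendant at $u$, but now the vertex set is $V(G-v_1)$ of size $|V(G)|-1$. This yields $t_{(G^+)-v_1}(z)=t_{G-v_1}(u)+|V(G)|-1$ and $t_{(G^+)-v_1}(u)=t_{G-v_1}(u)+1$. Subtracting the two pairs, the $|V(G)|$-terms line up so that only a constant survives: $\delta_{G^+}(z)=t_{G^+}(z)-t_{(G^+)-v_1}(z)=\big(t_G(u)-t_{G-v_1}(u)\big)+1=\delta_G(u)+1$, and likewise $\delta_{G^+}(u)=t_{G^+}(u)-t_{(G^+)-v_1}(u)=t_G(u)-t_{G-v_1}(u)=\delta_G(u)$. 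Chaining these gives $\delta_{G^+}(z)=\delta_{G^+}(u)+1=\delta_G(u)+1$, as claimed.

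There is no deep obstacle here; the only points requiring genuine care are bookkeeping ones. The first is making sure the vertex set being summed over shrinks from $V(G)$ to $V(G-v_1)$ when passing to $(G^+)-v_1$ (hence $|V(G)|$ versus $|V(G)|-1$), as this is precisely what makes the size terms cancel and leaves only the constant $+1$. The second is the commutation identity $(G^+)-v_1=(G-v_1)^+$, which hinges on $u,z\neq v_1$ and should be verified at the level of vertex and edge sets. If one wishes the transmissions to be manifestly finite, one also notes that $G-v_1$ is taken to be connected, so all distances involved are well defined.
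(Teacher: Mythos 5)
Your proof is correct and follows essentially the same route as the paper's: both establish the four transmission identities $t_{G^+}(u)=t_G(u)+1$, $t_{G^+-v_1}(u)=t_{G-v_1}(u)+1$, $t_{G^+}(z)=t_G(u)+|V(G)|$, $t_{G^+-v_1}(z)=t_{G-v_1}(u)+|V(G)|-1$ and subtract. You merely spell out the commutation $(G^+)-v_1=(G-v_1)^+$ and the distance bookkeeping that the paper leaves implicit.
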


\begin{proof}
It is clear that $t_{G^+}(u) = t_{G}(u) +1$ and $t_{G^+-v_1}(u) = t_{G-v_1}(u) +1$. Further it is easy to see that $t_{G^+}(z) = t_{G}(u) +n(G)$ and that $t_{G^+-v_1}(z) = t_{G-v_1}(u) +n(G-v_1)$. Thus, the statement follows.
\qed \end{proof}

\begin{lemma} \label{l:c=7 a vic}
Let $G$ be a connected graph with a fixed vertex $w$. Take a cycle $C_c$ of
length $c \geq 7$ and connect it to $G$ by identifying one vertex on the cycle
with $w$ and denote the resulting graph $G^*$. Let $v_1$  be a neighbor of $w$
on the cycle $C_c$. Hence, 
$$\delta_{G^*}(w) = t_{G^*}(w) - t_{G^*-v_1}(w)\leq -2.$$
\end{lemma}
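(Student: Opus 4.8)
The plan is to exploit the fact that $w$ is a cut vertex of $G^*$ separating the attached cycle from the rest of $G$, so that the whole difference $\delta_{G^*}(w)$ collapses onto a contribution coming only from the cycle vertices. First I would label the cycle as $w = u_0, u_1, \dots, u_{c-1}$ and, using the symmetry of the two neighbours of $w$, assume without loss of generality that $v_1 = u_1$. I would then split the transmission $t_{G^*}(w) = \sum_{x} \dist_{G^*}(w,x)$ into the part running over $x \in V(G)$ and the part running over the cycle vertices $u_1, \dots, u_{c-1}$. Since $w$ is a cut vertex, every shortest path from $w$ to a vertex of $G$ stays inside $G$ and uses no cycle vertex; deleting the cycle vertex $v_1$ therefore leaves all these distances unchanged. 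Hence the $V(G)$-parts of $t_{G^*}(w)$ and $t_{G^*-v_1}(w)$ both equal $t_G(w)$ and cancel in the difference, so the graph $G$ plays no further role.

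What remains is to compare the cycle contributions. In $G^*$ the distance from $w$ to $u_i$ is $\min(i, c-i)$, while in $G^* - v_1$ the cycle is cut open into a path and the distance from $w$ to $u_i$ (now only for $i = 2, \dots, c-1$) becomes $c-i$. Thus I would reduce the claim to the purely numerical identity
$$\delta_{G^*}(w) = \sum_{i=1}^{c-1} \min(i, c-i) \; - \; \sum_{i=2}^{c-1} (c-i).$$
The second sum telescopes to $\binom{c-1}{2}$, and the first is evaluated by the standard ``up-then-down'' count, whose closed form depends on the parity of $c$: it equals $\frac{c^2-1}{4}$ when $c$ is odd and $\frac{c^2}{4}$ when $c$ is even.

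Finally I would substitute and simplify, obtaining $\delta_{G^*}(w) = \frac{(c-1)(5-c)}{4}$ for odd $c$ and $\delta_{G^*}(w) = \frac{-c^2 + 6c - 4}{4}$ for even $c$, and verify that each is at most $-2$ exactly once $c \geq 7$. Both expressions are decreasing in $c$ on the relevant range, so it suffices to check the two boundary values $c = 7$ (giving $-3$) and $c = 8$ (giving $-5$). The only real subtlety, and the main thing to get right, is the parity split in evaluating $\sum \min(i, c-i)$ together with the bookkeeping that the deleted neighbour $v_1$ is itself a cycle vertex and so drops out of the second sum; this is also what pins the hypothesis at $c \geq 7$, since the same computation yields only $-1$ for $c = 6$ and $0$ for $c = 5$.
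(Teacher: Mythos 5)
Your proof is correct and follows essentially the same route as the paper: both use the cut-vertex decomposition at $w$ to cancel the $t_G(w)$ contribution and then compare the transmission of $w$ in $C_c$ with its transmission as an endpoint of $P_{c-1}$, splitting by the parity of $c$. Your closed forms $\frac{(c-1)(5-c)}{4}$ (odd) and $\frac{-c^2+6c-4}{4}$ (even) agree with the paper's values, e.g.\ $-3$ at $c=7$ and $-5$ at $c=8$.
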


\begin{proof}
 As $w$ is a cut vertex in  $G^*$, the transmission $t_{G^*}(w)$ is equal to the
 sum of its transmission in $G$ and $C_c$. Thus, $t_{G^*}(w) =
 t_{G}(w)+t_{C_c}(w)$. Similarly, for $G^*-v_1$ we get $t_{G^*-v_1}(w) = t_G(w)+t_{P_{c-1}}(w)$,
 where $w$ is an end vertex of $P_{c-1}$.


We now distinguish two cases according to the parity of $c$.

If $c=2a$ and $a \geq 4$, then
$$\delta_{G^*}(w)= t_{C_{2a}}(w)-t_{P_{2a-1}}(w) = a^2 -(2a^2-3a+1) \leq -2.$$

If $c=2a+1$ and $a \geq 3$, then
$$\delta_{G^*}(w)= t_{C_{2a+1}}(w)-t_{P_{2a}}(w) = a^2+a -(2a^2-a) \leq -2.$$
This completes the proof. \qed
\end{proof}

\begin{lemma} \label{l:c=5,6}
Let $G$ be a connected graph with a fixed vertex $w$. Take a cycle $C_c$ of
length $c\in\{5,6\}$ and connect it to $G$ by identifying one vertex on the
cycle with $w$. Let $v_2$  be a vertex in distance $2$ from $w$ on the cycle
$C_c$ and let $v_1$ be the only common neighbour of $w$ and $v_2$ on the cycle
$C_c$. Add a path of length $2$ to $C_c$ by identifying one of its endpoints
with $v_2$ and denote the resulting graph $G^*$.  We conclude that $\delta_{G^*}(w) \leq
-2$.
\end{lemma}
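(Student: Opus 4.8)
The plan is to exploit the fact that $w$ remains a cut vertex after deleting $v_1$, exactly as in the proof of Lemma~\ref{l:c=7 a vic}: since $v_1$ lies on the cycle, its removal does not disconnect $w$ from $G$. Write $H$ for the subgraph hung at $w$, namely the cycle $C_c$ together with the pendant path of length $2$ attached at $v_2$. Because $w$ is a cut vertex of both $G^*$ and $G^*-v_1$, transmissions split as $t_{G^*}(w)=t_G(w)+t_H(w)$ and $t_{G^*-v_1}(w)=t_G(w)+t_{H-v_1}(w)$. The $G$-part cancels, leaving
$$\delta_{G^*}(w)=t_H(w)-t_{H-v_1}(w),$$
so the whole question reduces to a computation inside the small fixed graph $H$.

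Next I would identify $H-v_1$. Deleting $v_1$ (the common neighbour of $w$ and $v_2$) severs its two incident cycle-edges, so the cycle opens into the path from $w$ to $v_2$ along the arc avoiding $v_1$, which has length $c-2$; appending the pendant path at $v_2$ then yields a single path on $c+1$ vertices with $w$ at one end. Hence $t_{H-v_1}(w)=1+2+\cdots+c=\binom{c+1}{2}$, which is $15$ for $c=5$ and $21$ for $c=6$.

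It then remains to compute $t_H(w)$ directly in the two cases. Summing the distances from $w$ to the cycle vertices (taken along the shorter arc of $C_c$) and to the two pendant vertices (which necessarily route through $v_2$, itself at distance $2$ from $w$) gives $t_H(w)=13$ for $c=5$ and $t_H(w)=16$ for $c=6$. Therefore $\delta_{G^*}(w)=13-15=-2$ and $\delta_{G^*}(w)=16-21=-5$ respectively, so in both cases $\delta_{G^*}(w)\le -2$, as claimed.

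The computation is elementary, so the only real care needed is bookkeeping: one must measure the distances from $w$ to the two pendant vertices through $v_2$ rather than along a nonexistent shortcut, and correctly recognise $H-v_1$ as a genuine path, so that its transmission from an endpoint is the triangular number $\binom{c+1}{2}$. The mild subtlety worth flagging is that the bound is tight exactly at $c=5$; indeed, without the extra pendant path one would get $\delta_{G^*}(w)=0$ for $c=5$ and $-1$ for $c=6$, which is precisely why the construction here must append the length-$2$ path that the $c\ge 7$ case of Lemma~\ref{l:c=7 a vic} could dispense with.
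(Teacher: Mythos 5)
Your proposal is correct and follows essentially the same route as the paper: decompose the transmission of $w$ at the cut vertex, reduce to the small graph $H$ (cycle plus pendant path), and compute $t_H(w)-t_{H-v_1}(w)$ directly, obtaining $-2$ for $c=5$ and $-5$ for $c=6$, exactly the values the paper states. Your extra remark explaining why the appended length-$2$ path is indispensable for $c\in\{5,6\}$ is a nice observation but not part of the paper's (one-line) proof.
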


\begin{proof}
By the same argumentation as in the previous lemma, we get that
$\delta_{G^*}(w)= t_{G^*}(w)-t_{G^*-v_1}(w)=-2$ for $c=5$ and $\delta_{G^*}(w)= -5$ for $c=6$.
\qed \end{proof}

The following theorem is the main step towards proving the main result of this paper.

\begin{theorem}\label{t:main}
Let $c,k$ be natural numbers.
\begin{itemize}
  \item If $c\in\{5,6\}$, then there exist infinitely many cactus graphs with exactly $k$ cycles of length $c$ that contain at least
 $k$ good vertices.
  \item  If $c\geq 7$, then there exist infinitely many cactus graphs with exactly $k$ cycles of length $c$ that contain at least $2k$ good vertices.
\end{itemize}
\end{theorem}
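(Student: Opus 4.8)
The plan is to build the desired cacti by hanging $k$ essentially independent ``cycle gadgets'' onto a common tree skeleton and then tuning the lengths of some pendant paths so that the Wiener index is unchanged by the removal of a prescribed vertex of each gadget. For $c\ge 7$ a gadget is a bare cycle $C_c$ identified with the skeleton at a cut vertex $w$; by Lemma~\ref{l:c=7 a vic} the two neighbours $v_1,v_1'$ of $w$ on the cycle are the natural candidate good vertices, and since they are interchanged by the reflection of $C_c$ fixing $w$, it suffices to certify one of them. This is exactly where the factor $2k$ comes from. For $c\in\{5,6\}$ the gadget additionally carries the length-$2$ path of Lemma~\ref{l:c=5,6} attached at $v_2$; this breaks the reflection, leaving a single certified candidate $v_1$ per cycle and hence the count $k$.

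To certify a candidate $v_1$ I would work with $\Delta(G)=W(G)-W(G-v_1)=\tfrac12\big(t_G(v_1)+\sum_{x\ne v_1}\delta_G(x)\big)$ and exploit the fact that $v_1$ is a degree-$2$ vertex of a single cycle, so $G-v_1$ stays connected and the only distances that change are those routed through that cycle. The key simplification is a locality principle: if a vertex $x$ is separated from $v_1$ by the cut vertex $w$, then $\delta_G(x)=\delta_G(w)+\dist_G(x,w)$, so the skeleton and all other gadgets contribute to $\sum_{x\ne v_1}\delta_G(x)$ in a way that depends only on their distances to $w$ and on the single number $\delta_G(w)\le -2$ supplied by Lemmas~\ref{l:c=7 a vic} and~\ref{l:c=5,6}. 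Combined with Lemma~\ref{l:pendant}, which says a pendant path contributes the arithmetic progression $\delta_G(w)+1,\delta_G(w)+2,\dots$, this turns $\Delta(G)$ into an explicit piecewise-quadratic function of the skeleton distances and the pendant-path lengths.

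With this formula in hand the strategy is to choose the pendant-path lengths and the attachment positions so that $\Delta=0$ holds simultaneously for all $k$ candidates. The cleanest way to decouple the $k$ conditions is to make the skeleton together with the attached gadgets invariant under a group permuting the $k$ gadgets, so that all candidate vertices fall into one or two automorphism orbits and a single equation $\Delta=0$ certifies them all at once, since removing automorphic vertices yields isomorphic graphs of equal Wiener index. The remaining freedom, a common balancing length, is then used to manufacture infinitely many non-isomorphic examples: I would show that the resulting one-variable expression $\Delta(\ell)$ vanishes for infinitely many admissible $\ell$, or, if it is strictly monotone, reintroduce a second balancing parameter so that the condition $\Delta=0$ cuts out an infinite discrete family.

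The main obstacle is precisely this exact integral tuning. Each cycle contributes a fixed negative deficit $\delta_G(w)\le -2$ to $\sum_{x\ne v_1}\delta_G(x)$, while lengthening the pendant paths moves $t_G(v_1)$ and $\sum_{x\ne v_1}\delta_G(x)$ by competing quadratic amounts, so $\Delta=0$ is an exact parity/Diophantine condition rather than a mere inequality. I expect the bulk of the work to lie in (i) verifying the locality principle and assembling the exact closed form of $\Delta$ for the symmetric construction, and (ii) exhibiting an arithmetic progression of parameters for which this closed form vanishes \emph{exactly}, the integrality being the delicate step since a continuity argument only guarantees a sign change, not a zero. Once a single admissible configuration is found, the imposed symmetry immediately upgrades one good vertex to the full complement of $2k$ (respectively $k$), completing the proof.
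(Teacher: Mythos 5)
Your overall plan coincides with the paper's: the same gadgets (bare $C_c$ for $c\ge 7$, $C_c$ with a length-$2$ path at $v_2$ for $c\in\{5,6\}$), all $k$ of them identified at a single cut vertex $w$, the same use of Lemmas~\ref{l:c=7 a vic}, \ref{l:c=5,6} and \ref{l:pendant}, and the same symmetry argument to promote one certified good vertex to $2k$ (resp.\ $k$). Your ``locality principle'' $\delta_G(x)=\delta_G(w)+\dist_G(x,w)$ for $x$ separated from $v_1$ by $w$ is correct and is exactly the iterated form of Lemma~\ref{l:pendant} that the paper uses.

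However, you stop short of the one step that actually carries the proof, and you say so yourself: you never exhibit parameters for which $\Delta=0$ holds \emph{exactly}, nor a mechanism producing infinitely many such graphs; you only list two candidate strategies (finding infinitely many integer zeros of a quadratic $\Delta(\ell)$, or adding a second balancing parameter) without verifying that either works. This is a genuine gap, and the worry you raise about a ``Diophantine condition'' is a sign that the closed-form route you sketch is harder than necessary. The paper's resolution avoids any closed form for $\Delta$: attach to $w$ a path $u_d u_{d-1}\dots u_0$ of length $d=-\delta(w)\ge 2$, so that by Lemma~\ref{l:pendant} the values $\delta(u_i)=-i$ sweep out every integer from $-d$ up to $0$. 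A pendant vertex hung at $u_0$ then contributes exactly $+1$ to $\Delta$, one hung at $u_2$ contributes exactly $-1$, and one hung at $u_1$ contributes exactly $0$. Whatever integer $\Delta(G_2)$ happens to be, it is cancelled by attaching $|\Delta(G_2)|$ pendant vertices at $u_0$ or $u_2$ as appropriate, and the $\delta=-1$ position $u_1$ absorbs arbitrarily many further pendant vertices without changing $\Delta$, which is precisely what yields infinitely many non-isomorphic examples. Without this (or an equivalent) explicit integer-tuning device, your argument does not establish the existence of even one graph with $\Delta=0$, let alone infinitely many.
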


\begin{proof}
Our construction uses similar techniques as in~\cite{knor2018}. We proceed in four steps by constructing graphs $G_1$, $G_2$, $G_3$ and $G_4$. The choice of $G_1$ is different for $c\in\{5,6\}$ and for $c\geq 7$. Therefore, we distinguish two cases.

\begin{case}[$c\in\{5,6\}$]
Let $H$ be a cycle $C_c$ with a path of length $2$ attached to it by identifying one of its endpoints with a vertex on the cycle. Take $k$ copies of $H$ and denote them $H_1, \ldots, H_k$. Fix a vertex $v_0^i \in V(H_i)$ in distance two from the only vertex of degree $3$ in $H_i$. Join $H_1, \ldots, H_k$ together by identifying all $v_0^i$ and denote this new vertex by $w$. Denote the resulting graph $G_1$ and denote by $v_1 \in V(H_1)$ the only common neighbour of $w$ and the vertex of degree 3 on $C_c$. 
\end{case}
\begin{case}[$c\geq 7$]
Take $k$ copies of a cycle $C_c$, fix a vertex in each copy and identify all fixed vertices to one vertex $w$. Denote the resulting graph $G_1$ and denote by $v_1$ any neighbor of $w$ in $G_1$.
\end{case}

For both cases, the graphs $G_1$ are depicted in Fig.~\ref{fig:G1}.

\begin{figure}[h]
    \centering
    \includegraphics[scale=0.64]{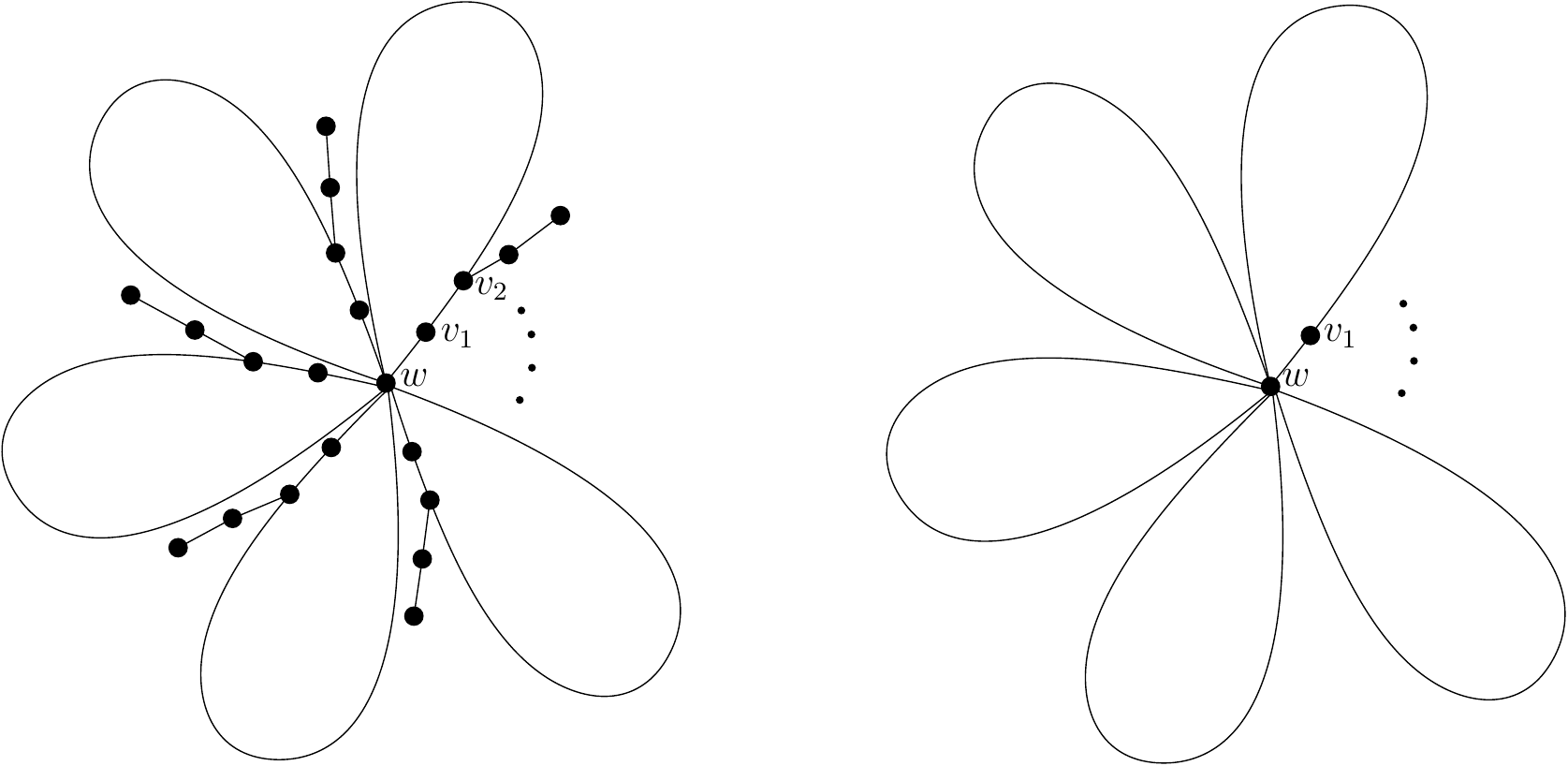}
    \caption{Graphs $G_1$ for $c\in\{5,6\}$ (on the left) and for $c \geq 7$ (on the right).}
    \label{fig:G1}
\end{figure}

Note that in both cases $\delta_{G_1}(w) \leq -2$, by Lemmas~\ref{l:c=7 a vic} and~\ref{l:c=5,6}.

Set $d :=-\delta_{G_1}(w)$ and let $P^d \coloneqq u_d u_{d-1} \ldots u_1 u_0$ be a path of length $d$. Note that $d\geq 2$. Attach $P^d$ to $w$  by identifying $u_d$ with $w$ and denote the resulting graph $G_2$. The crucial observation follows immediately by iterative
use of Lemma~\ref{l:pendant}, namely $\delta_{G_2}(u_i) =-i$. In other words, the value of $\delta_{G_2}(u_i)$ increases along the path $P^d$ from $\delta_{G_2}(w) =-d$ to $\delta_{G_2}(u_0) =0$.

\begin{itemize}
  \item If $\Delta(G_2)=0$, we set $G_3:=G_2$.
  \item If $\Delta(G_2)<0$, we connect exactly $-\Delta(G_2)$ new pendant vertices to $u_0$ in $G_2$ and  denote the resulting graph $G_3$. As $\delta_{G_2}(u_0) =0$, by Lemma~\ref{l:pendant} the contribution $\delta_{G_3}(x)$ of any pendant vertex $x \in V(G_3) \setminus V(G_2)$ to $\Delta(G_3)$ is $\delta_{G_3}(x)=1$ and thus  $\Delta(G_3)=0$.
  \item If $\Delta(G_2)>0$, we connect exactly $\Delta(G_2)$ new pendant vertices to $u_2$ in $G_2$ and denote the resulting graph $G_3$. As $\delta_{G_2}(u_2) =-2$, by Lemma~\ref{l:pendant} the contribution $\delta_{G_3}(x)$ of any pendant vertex $x \in V(G_3) \setminus V(G_2)$ to $\Delta(G_3)$ is $\delta_{G_3}(x)=-1$ and thus again $\Delta(G_3)=0$.
\end{itemize}

Finally, for arbitrary $p\geq 0$ we add to $G_3$ exactly $p$ new pendant vertices, connect them all to $u_1$ and denote the resulting graph $G_4$. As $\delta_{G_3}(u_1) =-1$, by Lemma~\ref{l:pendant} we get that for every $x \in V(G_4) \setminus V(G_3)$, the contribution $\delta_{G_4}(x)=0$ and thus $\Delta(G_4)=\Delta(G_3)=0$. In other words, $v_1$ is a good vertex in $G_4$ for any choice of $p$.

\begin{figure}[h]
    \centering
    \includegraphics[scale=0.6]{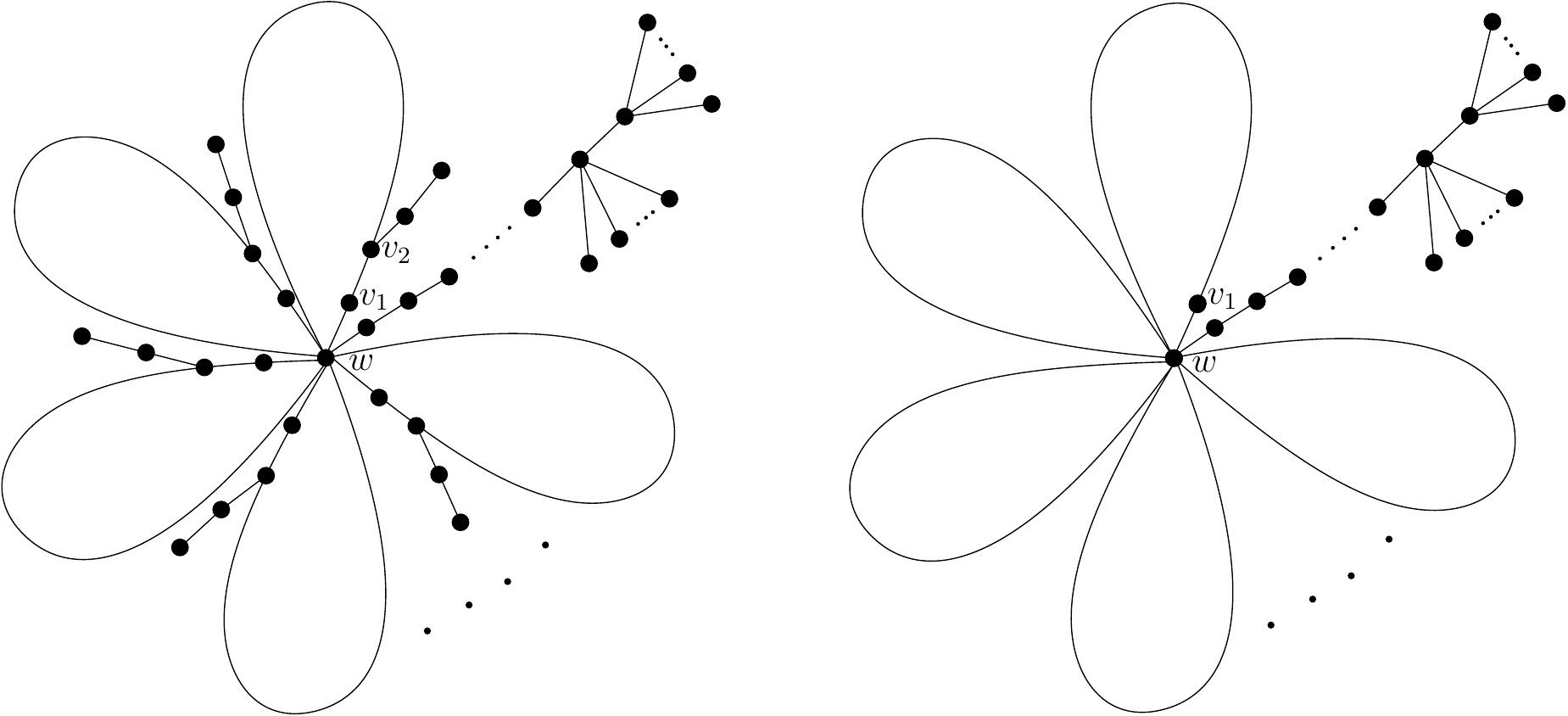}
    \caption{An illustration of construction from Theorem \ref{t:main}. Graphs $G_4$ for $c\in\{5,6\}$ are on the left and for $c \geq 7$ on the right.}
    \label{fig:G4}
\end{figure}

It remains to show that $v_1$ is not the only good vertex in $G$. This follows immediately from the symmetry of the starting graph $G_1$. It is obvious that for $c\in\{5,6\}$, there are other $k-1$ good vertices other than $v_1$ (one in each copy of $C_c$) since we can find one vertex in each cycle such that its removal yields a graph isomorphic to $G_4-v_1$. If $c \geq 7$, we can argue similarly that in $G_4$, there are $2k$ good vertices (two in each copy of $C_c$). This is illustrated in Fig.~\ref{fig:G4}.
\qed \end{proof}

So far, we proved that for every natural $k$, there are infinitely many graphs
with at least $k$ (or $2k$) good vertices. Now, we can state the main result of
the paper which says that the graphs constructed in Theorem~\ref{t:main}
contain no other good vertices.

\begin{theorem}
\label{t:exactly2k}
Let $k$ be a natural number. For every $c \geq 7$, there are infinitely many
cacti graphs with exactly $k$ cycles of length $c$ that contain exactly $2k$
good vertices.
\end{theorem}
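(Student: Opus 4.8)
The plan is to show that the graphs $G_4$ produced in the $c \geq 7$ case of Theorem~\ref{t:main} already give the desired family, once we discard finitely many values of the free parameter $p$. Recall that $G_4$ consists of $k$ cycles $C_c$ sharing a common vertex $w$, together with a pendant path $P^d$ and a controlled number of pendant vertices, and that Theorem~\ref{t:main} exhibits $2k$ good vertices, namely the two neighbours of $w$ on each cycle. First I would classify every remaining vertex of $G_4$ into three types: (i) cut vertices, (ii) leaves, and (iii) interior cycle vertices, i.e.\ the $c-3$ vertices on each cycle that are neither $w$ nor a neighbour of $w$. The goal is to prove that for all but finitely many $p$, none of these is good.

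Types (i) and (ii) are immediate. If $v$ is a cut vertex, then $G_4 - v$ is disconnected, so $W(G_4 - v) = \infty \neq W(G_4)$ and $v$ is not good; this eliminates $w$, all internal vertices of $P^d$, and every vertex to which pendant vertices were attached. If $v$ is a leaf attached at $u$, then by Proposition~\ref{prop:pendant} applied to $G_4 = (G_4 - v)^+$ we get $W(G_4) - W(G_4 - v) = t_{G_4 - v}(u) + |V(G_4 - v)| > 0$, so $v$ is not good either. This disposes of every pendant vertex. Since all interior cycle vertices have degree $2$ and are not cut vertices, only type (iii) remains.

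For type (iii), fix one cycle $C$ and label its vertices $w = x_0, x_1, \dots, x_{c-1}$, so that the good neighbours are $x_1, x_{c-1}$ and the interior vertices are $x_2, \dots, x_{c-2}$. Because $x_1$ is good, $W(G_4) = W(G_4 - x_1)$, so
\[
W(G_4) - W(G_4 - x_j) = W(G_4 - x_1) - W(G_4 - x_j),
\]
and I would compare the two deletions directly. Both $G_4 - x_1$ and $G_4 - x_j$ are obtained from $G_4$ by leaving everything outside $C$ untouched and replacing the cycle by a branch hanging at the cut vertex $w$: deleting $x_1$ yields a single path of length $c-2$, while deleting $x_j$ yields two paths of lengths $j-1$ and $c-1-j$. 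Writing $r$ for the number of vertices outside $C$ (that is, $r = |V(G_4)| - c$, the same for every cycle) and using the cut-vertex decomposition
\[
W(B \cup_w R) = W(B) + W(R) + |R \setminus \{w\}|\, t_B(w) + |B \setminus \{w\}|\, t_R(w),
\]
the identical part $R$ contributes equally to both deletions, so the difference reduces to
\[
W(G_4 - x_1) - W(G_4 - x_j) = \bigl(W(B_1) - W(B_2)\bigr) + r\,\bigl(t_{B_1}(w) - t_{B_2}(w)\bigr),
\]
where $B_1, B_2$ are the two branches just described.

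The decisive computation is the slope $\beta_j := t_{B_1}(w) - t_{B_2}(w)$. Since a path of length $\ell$ rooted at $w$ has transmission $\binom{\ell+1}{2}$ at $w$, we obtain
\[
\beta_j = \binom{c-1}{2} - \binom{j}{2} - \binom{c-j}{2} = (j-1)(c-1-j),
\]
which is strictly positive for every interior index $2 \leq j \leq c-2$. Hence $W(G_4) - W(G_4 - x_j)$ is a nonconstant affine function of $r$ with positive slope, so it vanishes for at most one value of $r$; equivalently, each interior position on each cycle forces at most one bad value of $p$. Taking the union over the (finitely many) positions and cycles leaves infinitely many $p$ for which no vertex of type (i)--(iii) is good, so $G_4$ has exactly the $2k$ good vertices guaranteed by Theorem~\ref{t:main}. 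I expect the main obstacle to be precisely the evaluation of $\beta_j$: the whole argument hinges on the clean factorisation $(j-1)(c-1-j)$ and its strict positivity, which is what guarantees that the deletion difference is genuinely nonconstant in $r$ rather than accidentally identically zero.
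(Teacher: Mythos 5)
Your proof is correct, and it follows the same basic strategy as the paper's: rule out cut vertices and leaves, then use the fact that $x_1$ is good to reduce the question for an interior cycle vertex $x_j$ to comparing $W(G_4-x_1)$ with $W(G_4-x_j)$, localizing the difference via the cut vertex $w$. Where you diverge is in the execution. The paper compares the two deletions distance-by-distance: it observes that the within-$K$ and within-$L$ contributions coincide (both within-$L$ sums equal $W(P_{c-1})$, since deleting any cycle vertex turns the cycle into a path on $c-1$ vertices), and then shows each remaining cross-term difference is nonnegative with the batch $|K|\cdot(c-i-1)$ strictly positive; this yields $W(G)-W(G-x_j)>0$ for \emph{every} value of $p$. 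You instead compute the aggregate slope $\beta_j=t_{B_1}(w)-t_{B_2}(w)=(j-1)(c-1-j)$ in closed form and conclude only that at most one value of $r$ (hence of $p$) is bad per position, which suffices for the theorem but is slightly weaker than what the paper proves. In fact your argument sharpens to the paper's conclusion for free: your $B_1$ and $B_2$ are both paths on $c-1$ vertices ($B_2$ is a path with $w$ in its interior), so $W(B_1)-W(B_2)=0$ and your affine function is $r\beta_j>0$ for all $r\ge 1$ --- no values of $p$ need to be discarded. Your explicit factorisation of $\beta_j$ is a nice quantitative refinement of the paper's termwise inequality $\dist_{G-v_1}(w,v)\ge\dist_{G-v_i}(w,v)$.
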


\begin{proof}
Consider the graph $G_4$ for $c \geq 7$ from the proof of Theorem \ref{t:main}
and let us denote it by $G$. It follows from Theorem \ref{t:main} that there
are at least $2k$ good vertices in $G$. Now, we prove that there are exactly
$2k$ good vertices in $G$.

As $G-v$ has to be connected, the only good vertices may occur on the cycles.
Pick one of the cycles in $G$ and denote its vertex set by $L$. Recall that $w$ is the only
common vertex of all cycles. We denote the vertices of $L$ by $w=v_0 \ldots
v_{c-1}$, consecutively. Set $K = V(G) \setminus L$.

We know from Theorem \ref{t:main} that $W(G)-W(G-v_1)=W(G)-W(G-v_{c-1})=0$. 
Suppose for a contradiction that $v_i$ is a good vertex for some $i=2, \dots
c-2$. Our goal is to prove that $W(G)- W(G-v_i)>0$ for every $i \in \{ 2,
\dots \lfloor \frac c2 \rfloor \}$. This will complete the proof
of Theorem \ref{t:exactly2k} as for the other vertices in $L$, we can argue
in the same way due to symmetry. 

Let us denote $\Delta' = W(G)- W(G-v_i)$. As $W(G)-W(G-v_1)=0$, we have 
$$\Delta' = W(G)- W(G-v_i)= W(G-v_1) -W(G-v_i).$$
Because of  $V(G)= K \ \dot \cup \ L$, we may rewrite $W(G-v_1)$  and $W(G-v_i)$ as
\begin{align*}
W(G-v_1)=  &\sum_{u,v \in K}\dist_{G-v_1}(u,v) + \sum_{u \in K, v \in L - v_1}\dist_{G-v_1}(u,v) \\ &+
      \sum_{u,v \in L - v_1}\dist_{G-v_1}(u,v),
\end{align*}
and analogously,     
\begin{align*}
W(G-v_i) = &\sum_{u,v \in K}\dist_{G-v_i}(u,v) + \sum_{u \in K, v \in L - v_i}\dist_{G-v_i}(u,v) \\ &+\sum_{u,v \in L - v_i}\dist_{G-v_i}(u,v).
\end{align*}
It is obvious that the equation $\dist_{G-v_1}(u,v)=\dist_{G-v_i}(u,v)$ holds for all $u,v \in K $.
Further it is clear that 
$$\sum_{ u,v \in L - v_1} \dist_{G-v_1}(u,v)=\sum_{ u,v \in L - v_i} \dist_{G-v_i}(u,v) = W(P_{c-1}).$$

Using these observations, we obtain that
\begin{align*}
\Delta' &= W(G-v_1) - W(G-v_i) \\
&= \sum_{ u \in K,v \in L - v_1} \dist_{G-v_1}(u,v) - \sum_{ u \in K,v \in L - v_i} \dist_{G-v_i}(u,v).
\end{align*}


Let us define $L':=  L - \{ v_1,v_i \}$. We can also write these sums as 
\begin{align*}
\Delta'
= &\sum_{ u \in K, v \in L'} \dist_{G-v_1}(u,v) + \sum_{ u \in K} \dist_{G-v_1}(u,v_i) \\
&\quad -  \sum_{ u \in K, v \in L'} \dist_{G-v_i}(u,v) - \sum_{ u \in K} \dist_{G-v_i}(u,v_1)
\end{align*}


As $w$ is a cut vertex in $G$, $\dist_{G-v_1}(u,v) = \dist_{G-v_1}(u,w)
+\dist_{G-v_1}(w,v) $ holds for all $u \in K, v \in L \setminus \{ v_1 \}$.
Analogously, $\dist_{G-v_i}(u,v) = \dist_{G-v_i}(u,w) +\dist_{G-v_i}(w,v)$
holds for all $u \in K$ and $v \in L \setminus \{ v_i \}$. $u \in K, v \in L
\setminus \{ v_i \}$. Further note that $\dist_{G-v_1}(u,w)=
\dist_{G-v_i}(u,w)$ holds for all $u \in K$.

Thus we get 
\begin{align*}
\Delta' =&\, |K|\cdot \big(\dist_{G-v_1}(v_i,w) - \dist_{G-v_i}(v_1,w)\big) \\
 &+ \sum_{ u \in K,v \in L'} \big(\dist_{G-v_1}(w,v)-\dist_{G-v_i}(w,v)\big).
\end{align*}
It easy to see that
$$\dist_{G-v_1}(v_i,w) - \dist_{G-v_i}(v_1,w) = (c-i-1) > 0.$$
Moreover, for every $v \in L'$, it holds that $\dist_{G-v_1}(w,v) \geq \dist_{G-v_i}(w,v)$. We finally get 
$$\Delta'= |K|\cdot (c-i-1) 
 + \sum_{ u \in K,v \in L'} (\dist_{G-v_1}(w,v)-\dist_{G-v_i}(w,v)) > 0.$$
This completes the proof.
\qed
\end{proof}

\begin{theorem}
\label{t:exactlyk}
Let $k$ be a natural number. For every $c \in \{5,6\}$, there are infinitely many cacti graphs with exactly $k$ cycles of length $c$ that contain exactly $k$ good vertices.
\end{theorem}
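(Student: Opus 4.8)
The plan is to follow the template of Theorem~\ref{t:exactly2k}: first restrict where good vertices can live, then eliminate all but the $k$ already found. Exactly as in that proof, a pendant vertex $z$ attached at $u$ satisfies $W(G)=W(G-z)+t_{G-z}(u)+(|V(G)|-1)>W(G-z)$ by Proposition~\ref{prop:pendant}, so it is never good, and removing a cut vertex disconnects $G$ and sends the Wiener index to infinity. Hence every good vertex lies on one of the $k$ cycles and is not a cut vertex. Writing the vertices of a fixed cycle as $w=v_0,v_1,\dots,v_{c-1}$ with the pendant path attached at the degree-$3$ vertex $v_2$, the cut vertices on this cycle are exactly $w$ and $v_2$, so the only candidates are $v_1,v_3,v_4,\dots,v_{c-1}$. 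Theorem~\ref{t:main} already gives that $v_1$ (and its analogue on each cycle) is good, so it remains to show that $v_3,\dots,v_{c-1}$ are \emph{not} good; by the symmetry between the $k$ cycles it suffices to treat one of them.

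The obstacle compared with the case $c\ge 7$ is that the pendant path at $v_2$ destroys the reflective symmetry of the cycle and, more importantly, creates a \emph{second} place where the rest of the graph touches the cycle: for the two path vertices the shortest route to the cycle runs through $v_2$, not through $w$, so the clean identity $\dist(u,v)=\dist(u,w)+\dist(w,v)$ used in Theorem~\ref{t:exactly2k} fails. My repair is to absorb the offending vertices into the gadget. Let $M$ consist of the chosen cycle together with its two attached path vertices, and let $K'=V(G)\setminus M$. Since all cycles share only $w$ and the main path is also attached at $w$, the set $K'$ meets $M$ only in $w$; consequently $M$ (and $M$ with any $v_i$, $i\neq 0$, deleted) is isometrically embedded, and for every $u\in K'$ and every surviving $v\in M$ one has $\dist(u,v)=\dist(u,w)+\dist_{M}(w,v)$, where $\dist(u,w)$ does not depend on which $v_i$ is removed.

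Feeding this decomposition into the difference $W(G-v_i)-W(G-v_1)$ (recall $W(G)=W(G-v_1)$) cancels the $K'$-internal pairs and collapses the $K'$-to-gadget contribution to $|K'|\,t_{M-v}(w)$, so that everything reduces to
$$W(G-v_i)-W(G)=\big(W(M-v_i)-W(M-v_1)\big)+|K'|\cdot\big(t_{M-v_i}(w)-t_{M-v_1}(w)\big)=\alpha_i+|K'|\,\beta_i,$$
where $\alpha_i,\beta_i$ depend only on $c$ and $i$. The last step is a short computation of the constants $\beta_i=t_{M-v_i}(w)-t_{M-v_1}(w)$ for the finitely many candidates: for $c=5$ one finds $\beta_3=-4$ and $\beta_4=-2$, and for $c=6$ one finds $\beta_3=-8$, $\beta_4=-7$ and $\beta_5=-4$. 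Since each $\beta_i\neq 0$, the affine function $|K'|\mapsto\alpha_i+|K'|\beta_i$ vanishes for at most one value of $|K'|$; taking the number $p$ of pendant vertices attached at $u_1$ large enough (which still produces infinitely many admissible graphs and changes neither the number nor the lengths of the cycles) forces $\alpha_i+|K'|\beta_i\neq 0$ simultaneously for every candidate on every cycle. Hence none of $v_3,\dots,v_{c-1}$ is good, leaving exactly the $k$ good vertices supplied by Theorem~\ref{t:main}. I expect the only genuine work to be the small gadget computations producing the $\beta_i$; once the gadget $M$ is isolated, the structural reduction is routine.
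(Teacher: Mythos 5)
Your proposal is correct, and its skeleton coincides with the paper's: both isolate the same gadget (your $M$ is the paper's $L$, the cycle together with the attached path of length $2$), both observe that good vertices must be non-cut vertices on cycles, and both exploit that $w$ is a cut vertex to split $W(G-v_1)-W(G-v_i)$ into a gadget-internal part and a cross part proportional to $t_{M-v_i}(w)-t_{M-v_1}(w)$. Where you diverge is the finishing move. The paper bounds the gadget-internal part by the extremality of the path among trees (your $\alpha_i=W(M-v_i)-W(M-v_1)=W(M-v_i)-W(P_{c+1})\le 0$) and then exhibits one strictly positive summand in the cross term (the pendant end of the attached path contributes $c-4>0$), which yields $W(G)-W(G-v_i)>0$ for \emph{every} $p\ge 0$. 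You instead keep the exact affine identity $\alpha_i+|K'|\beta_i$, compute the slopes $\beta_i$ (your values check out, and all are negative), and conclude only for all but finitely many $p$ via genericity --- which still produces infinitely many graphs and so proves the theorem. The trade-off: your route replaces the extremal fact about paths by a handful of mechanical transmission computations, at the cost of a nominally weaker conclusion; note that combining your identity with the observation $\alpha_i\le 0$ (path extremality again) and $\beta_i<0$ would immediately recover the paper's stronger statement that no member of the family has an extra good vertex.
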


\begin{proof}
We will proceed similarly as in the proof of the previous theorem. Again note that good vertices can be located only on cycles.
Let $G:=G_4$ for $c \in \{5,6\}$ from the proof of Theorem \ref{t:main}. We already know that there are at least $k$ good vertices in $G$. It remains to prove that there are exactly $k$ good vertices in $G$.

Again, take one of the cycles in $G$ together with the path of length $2$ attached to it and denote its set of vertices by $L$. Recall that $w$ is the only common vertex of all cycles. Denote the vertices of the cycle in $L$ consecutively by $w=v_0 \ldots v_{c-1}$ such that $v_2$ is the only vertex of degree $3$ in $L$. 
Let $K$ be the set of vertices of $V(G) \setminus L$.

We assume for a contradiction that there is another good vertex except for $v_1$. As $G-v_2$ is not connected, $v_2$ cannot be a good vertex. Our goal is to prove $W(G)-W(G-v_i)>0$ for every $i=3, \dots c-1$.

It is obvious that for all $u,v \in K$, it holds that $\dist_{G-v_1}(u,v)=\dist_{G-v_i}(u,v)$.  
Further it is clear that 
$$\sum_{ \{u,v\} \subset L - v_1} \dist_{G-v_1}(u,v)=W(P_{c+1}).$$
It is well known that for trees on $n$ vertices, the maximum Wiener index is attained exactly for the path $P_n$. Hence, for every $i \in \{3,\ldots,c-1\}$, we have
$$\sum_{ \{u,v\} \subset L - v_i} \dist_{G-v_i}(u,v) \leq \sum_{ \{u,v\} \subset L - v_1} \dist_{G-v_1}(u,v) = W(P_{c+1}).$$

Let us define $\Delta' := W(G)- W(G-v_i)$ and $L':=  L - \{ v_1,v_i \}$. 
By summarizing these observations and the fact that $W(G)-W(G-v_1)=0$, we obtain that
\begin{align*}
\Delta' &= W(G-v_1) -W(G-v_i)\\
&\geq \sum_{ u \in K,v \in L - v_1} \dist_{G-v_1}(u,v) - \sum_{ u \in K,v \in L - v_i} \dist_{G-v_i}(u,v).
\end{align*}
By the same computation as in the previous proof, we further get 
\begin{align*}
\Delta' = &|K|\cdot (\dist_{G-v_1}(v_i,w) - \dist_{G-v_i}(v_1,w)) \\
&+ \sum_{ u \in K,v \in L'} (\dist_{G-v_1}(w,v)-\dist_{G-v_i}(w,v)).
\end{align*}

Note that $\dist_{G-v_1}(v_i,w) \geq \dist_{G-v_i}(v_1,w)$
with equality if and only if $i=c-1$. Also note that 
$\dist_{G-v_1}(w,v) \geq \dist_{G-v_i}(w,v)$ for all $v \in L'$.

Let $v$ be the only pendant vertex in $L$, i.e. the end of the attached path of length $2$. For the vertex $v$, we have
$$\dist_{G-v_1}(w,v) - \dist_{G-v_i}(w,v) = c-4 >0.$$
We conclude that for every $i=3, \dots c-1$, $W(G)-W(G-v_i)>0$. Thus none of the vertices $v_i$, $i=3, \dots c-1$ can be a good vertex in $G$, a contradiction.
\qed
\end{proof}

Let us remark that if we define $G_1$ for $c \geq 7$ as it was done for $c \in
\{5,6\}$ (that is we add a path of length $2$ to the vertex $v_2$), we
obtain graphs with exactly $k$ good vertices also for $c\geq 7$ and for arbitrary
$k$.

Furthermore, for $k=1$, we obtain precisely the graphs constructed in
\cite{knor2018}. It follows from our results that their unicyclic graphs have
exactly one good vertex if the length $c$ of the unique cycle is 5 or 6 and
exactly two good vertices in the case when $c \geq 7$. Let us note that this fact
was not proved in~\cite{knor2018} and only the existence of at least one good
vertex was shown there.

\section{Negative results}

The following theorem explains why we cannot hope for a similar result when the cycle length $c$ is equal to $3$ or $4$.

\begin{theorem}\label{t:girth4}
Let $G$ be a connected graph which is not a tree. If the length of the longest cycle in $G$ is at most 4, then $G$ has no good vertex.
\end{theorem}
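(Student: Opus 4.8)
The plan is to prove the stronger statement that deleting \emph{any} vertex $v$ with $G-v$ connected strictly decreases the Wiener index. Since a vertex whose removal disconnects $G$ gives $W(G-v)=\infty$ and hence is never good, this covers all vertices. First I would record the identity obtained by splitting $W(G)$ according to whether an unordered pair meets $v$: writing
$$W(G)=t_G(v)+\sum_{\{x,y\}\subseteq V(G)\setminus\{v\}}\dist_G(x,y),$$
we get $W(G)-W(G-v)=t_G(v)-\sum_{\{x,y\}}\big(\dist_{G-v}(x,y)-\dist_G(x,y)\big)$, the sum ranging over unordered pairs in $V(G)\setminus\{v\}$. As $t_G(v)>0$, it suffices to show that removing $v$ changes no distance, that is, $\dist_{G-v}(x,y)=\dist_G(x,y)$ for all $x,y\neq v$.

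The heart of the argument is a local rerouting lemma: if $v$ is not a cut vertex and $G$ has no cycle of length exceeding $4$, then any two distinct neighbours $a,b$ of $v$ satisfy $\dist_{G-v}(a,b)\le 2$. Indeed, since $G-v$ is connected there is a shortest $a$--$b$ path $Q$ in $G-v$; because $Q$ avoids $v$ and is internally vertex-disjoint, appending the edges $av$ and $vb$ turns $Q$ into a genuine cycle of $G$ of length $|Q|+2$. The longest-cycle hypothesis then forces $|Q|+2\le 4$, so $|Q|\le 2$. I expect this cycle-length bound to be the crux of the whole proof and the single place where the assumption on the longest cycle is used.

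With the lemma in hand I would fix $x,y\neq v$ together with a shortest $x$--$y$ path $P$ in $G$. If $P$ avoids $v$, then $\dist_{G-v}(x,y)=\dist_G(x,y)$ immediately. Otherwise $v$ is internal on $P$, with predecessor $a$ and successor $b$, which are two distinct neighbours of $v$; replacing the length-$2$ subpath $a\,v\,b$ by a shortest $a$--$b$ path in $G-v$, of length at most $2$ by the lemma, yields a walk from $x$ to $y$ in $G-v$ of length at most $\dist_G(x,y)$. Hence $\dist_{G-v}(x,y)\le\dist_G(x,y)$, and since deleting a vertex never shortens a distance we obtain equality. Summing over all pairs gives $W(G-v)=W(G)-t_G(v)<W(G)$, so $v$ is not good; as $G$ is not a tree this applies to every non-cut vertex, while cut vertices are ruled out as above, giving the theorem. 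The only real obstacle is the bookkeeping: checking that the replacement is a legitimate walk in $G-v$ of the claimed length and that the auxiliary cycle is simple, both of which hold because $P$ is a path and $Q$ avoids $v$.
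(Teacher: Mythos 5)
Your proof is correct and takes essentially the same route as the paper's: both arguments reduce to showing that deleting a non-cut vertex $v$ changes no distance between remaining vertices, whence $W(G-v)=W(G)-t_G(v)<W(G)$. Your rerouting lemma (two neighbours of $v$ on a shortest path lie on a common cycle with $v$, hence are at distance at most $2$ in $G-v$) simply supplies the justification for the distance-preservation step that the paper states without detail.
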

\begin{proof}
Suppose for a contradiction that $G$ has a vertex $v$ such that $W(G) = W(G-v)$. It is clear that according to Proposition~\ref{prop:pendant}, $v$ cannot be a pendant vertex. It is obvious that $v$ has to lie on a cycle, otherwise $G-v$ would be a disconnected graph. Note that by deleting $v$ from $G$, the distance between each pair of vertices in $G-v$ remains the same as in $G$. It follows that $W(G-v)= W(G) - t_G(v)$ and hence $W(G-v)<W(G)$, a contradiction.
\qed
\end{proof}

\section{Experimental results}
\label{s:experimental}

During our research we also made a few computer-run experiments to make a census of unicyclic
graphs given prescribed number of vertices and number of good vertices.
Let us define a function $g_k(\mathcal{G})$ for a class of graphs $\mathcal{G}$ in the following
way:
$$g(\mathcal G,k) :=  | \{ G \in \mathcal G : \textrm{number of good vertices in } G \textrm{ is } k \} |.$$
Table
\ref{tab:unicyclic} sums our experiments for $\mathcal G = \mathcal U_n$, that is non-isomorphic connected unicyclic graphs
on $n$ vertices. We would like to point out several observations.
\begin{itemize}
  \item Up to order 8, there is no graph with a good vertex.
  \item The cycle $C_{11}$ is the only Šoltés's graph among all connected unicyclic graph up to order 18.
  \item There is one graph ($G_{12}$, depicted in Fig.~\ref{fig:6dobrych}) that has 6 good vertices. Note that $6=\frac{1}{2}|V(G_{12})|$ and see also Problem \ref{prob:atleasthalf}.
  \item All other unicyclic graphs up to 18 vertices have at most 4 good vertices.
  \item It does not hold that for every two good vertices $u,v \in V(G)$, there is
  an automorphism $f$ such that $f(u)=v$. This is illustrated on two graphs in Fig.~\ref{fig:auto}.
\end{itemize}

\begin{table}
\centering
\setlength{\tabcolsep}{4pt}
\begin{tabular}{|p{1.1cm}|r|r|r|r|r|r|r|r|r|r|}
\hline
$n$                     & 9   & 10  & 11  & 12  & 13  & 14  & 15  & 16  & 17  & 18  \\ \hline \hline
$|\mathcal U_n|$   & 240   & 657   & 1806   & 5026   & 13999   & 39260   & 110381   & 311465   & 880840   & 2497405   \\ \hline
$g(\mathcal U_n,1)$   & 1   & 1   & 3     & 21  & 62  & 207 & 599   & 1747  & 5040  & 13838 \\ \hline
$g(\mathcal U_n,2)$   & 0   & 1   & 3   & 9   & 16  & 34  & 90  & 229   & 483   & 1303  \\ \hline
$g(\mathcal U_n,3)$   & 0   & 0   & 0   & 0   & 0   & 0   & 1   & 5   & 0   & 30  \\ \hline
$g(\mathcal U_n,4)$   & 0   & 0   & 0   & 1   & 0   & 1   & 2   & 7   & 0   & 22  \\ \hline
$g(\mathcal U_n,6)$   & 0   & 0   & 0   & 1   & 0   & 0   & 0   & 0   & 0   & 0   \\ \hline
$g(\mathcal U_n,11)$   & 0   & 0   & 1   & 0   & 0   & 0   & 0   & 0   & 0   & 0   \\ \hline
\end{tabular}
\vspace{0.2cm}
\caption{Values of $g_k(\mathcal U_n)$ for various $k$ and $n$. For all $k > 6$ and listed $n$'s, the values are zero.}
\label{tab:unicyclic}
\end{table}

\begin{figure}[h]
    \centering
    \includegraphics[scale=1.4]{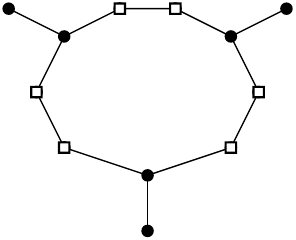}
    \caption{A graph of order 12, $G_{12}$, with exactly 6 good vertices. Good vertices are depicted as squares.}
    \label{fig:6dobrych}
\end{figure}

\begin{figure}[h]
    \centering
    \includegraphics[scale=0.9]{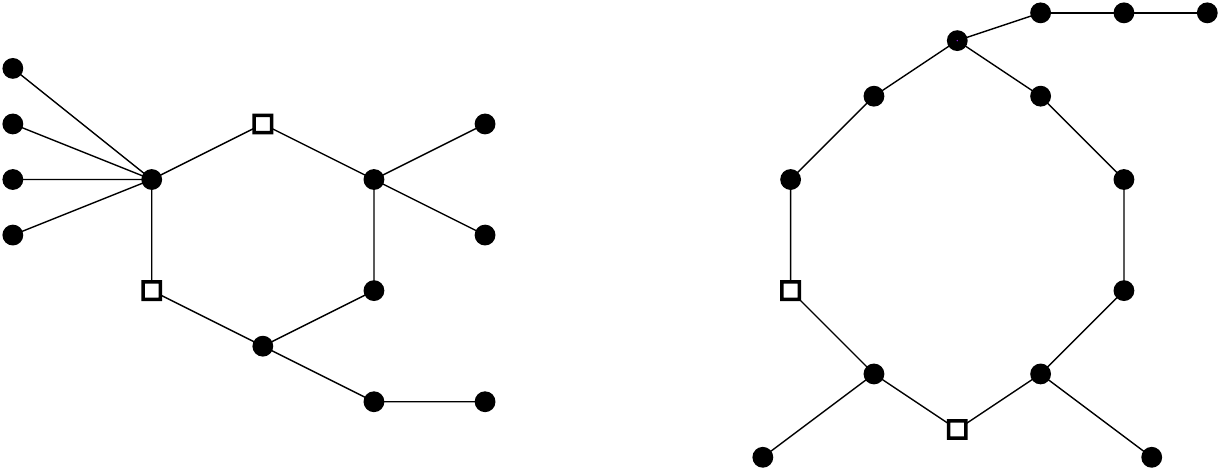}
    \caption{An example of two graphs having two good vertices $u,v$ such that there is no automorphism $f$ of the graph for which $f(u)=v$. Good vertices are depicted as squares.}
    \label{fig:auto}
\end{figure}

\section{Conclusions}

Šoltés's problem is still far from being resolved and its solution is the
ultimate goal. Aside from this problem, the partial results provided in this
paper and also the results of Knor et al.~\cite{knor_druhy,knor2018} are
important on their own, since they show us how does Wiener index change upon
slight modifications of a graph.

Finally, we would like to mention two natural relaxations of Šoltés's problem
that seem to be challenging.

We have seen that in $G_{12}$, half of the vertices are good. So far, $G_{12}$
is the only known graph with this property. It would be interesting to find
more (infinitely many) such graphs.

\begin{problem} \label{prob:atleasthalf}
  Are there infinitely many graphs $G$, such that $G$ has at least $\frac{1}{2}|V(G)|$ good vertices?
\end{problem}

We would like to mention that very recently, Akhmejanova et al.~\cite{new} made a significant effort towards solving Problem~\ref{prob:atleasthalf}.

Another interesting open question is the existence of graphs with only a few
(at most k) ``bad'' vertices.

\begin{problem} \label{prob:fewbad}
For a given $k$, find infinitely many graphs $G$ for which the equality
$$W(G) = W(G-v_1) = W(G-v_2)= \ldots = W(G-v_{n-k})$$
holds for $k$ distinct vertices $v_1, \dots v_{n-k} \in V(G)$.
\end{problem}

\begin{acknowledgements}
The first and the second author would like to acknowledge the support of the
grant SVV-2017-260452. The second author was
supported by Student Faculty Grant of Faculty of Mathematics and Physics,
Charles University.
\end{acknowledgements}

\section*{Declarations}
The authors declare that they have no conflict of interest.

\bibliographystyle{spmpsci}
\bibliography{bibliography}
  
\end{document}